\theoremstyle{plain}
\theoremstyle{plain}
\newtheorem{theorem}{Theorem} [section]
\newtheorem{corollary}[theorem]{Corollary}
\newtheorem{proposition}[theorem]{Proposition}
\theoremstyle{definition}
\theoremstyle{remark}
\newtheorem{remark}[theorem]{Remark}
\numberwithin{theorem}{section}
\numberwithin{equation}{section}
\numberwithin{figure}{section}
\begin{document}

\title[Stable CMC surfaces with free boundary]{On stable constant mean curvature surfaces with free boundary}

\author[Ivaldo Nunes]{Ivaldo Nunes}
\address{Departamento de Matem\'atica, Universidade Federal do Maranh\~ao, Campus do Bacanga, S\~ao Lu\'is, MA, Brasil.}
\address{Departament of Mathematics, Imperial College London, South Kensington Campus, London, UK. (\textit{Current address})}

\email{ivaldo.nunes@ufma.br}

\date{}
\begin{abstract}
In \cite{RV}, Ros and Vergasta proved, among others interesting results, a theorem which states that an immersed orientable compact stable constant mean curvature surface $\Sigma$ with free boundary in a closed ball $B\subset\mathbb{R}^3$ must be a planar equator, a spherical cap or a surface of genus 1 with at most two boundary components. In this article, by using a modified Hersch type balancing argument, we complete their work by proving that $\Sigma$ cannot have genus 1. 
\end{abstract}
\maketitle


\section{Introduction}

A very interesting and important variational problem in differential  geometry is the free boundary problem for constant mean curvature (CMC) hypersurfaces. Given a compact Riemannian manifold $(M^{n+1},g)$ with nonempty boundary, the problem consists of finding critical points of the area functional among all compact hypersurfaces $\Sigma\subset M$ with $\partial\Sigma\subset\partial M$ which divides $M$ into two subsets of prescribed volumes.
Critical points for this problem are CMC hypersurfaces $\Sigma\subset M$ meeting $\partial M$ orthogonally along $\partial\Sigma$ and they are known as \textit{CMC hypersurfaces with free boundary}. This subject have been studied by many authors, for example, Nitsche \cite{Ni}, Struwe \cite{St}, Ros and Vergasta \cite{RV}, Souam \cite{So}, B\"urger and Kuwert \cite{BK} and Ros \cite{Ros}, among others.  We additionally  mention that, in the last few years,  there has been also great interest  on minimal hypersurfaces with free boundary (i.e. critical points of the area functional without any volume constraint). For instance, see  Fraser and Schoen \cite{FS1,FS2}, Brendle \cite{Br}, Fraser and Li \cite{FL}, Li \cite{L}, Chen, Fraser and Pang \cite{CFP}, Ambrozio \cite{Amb}, Folha, Pacard and Zolotareva \cite{FPZ}, M\'aximo, Nunes and Smith \cite{MNS}, Volkmann \cite{Vo},  Freidin, Gulian and McGrath \cite{FGMc} and McGrath \cite{Mc}.

When a CMC hypersurface $\Sigma\subset M$ with free boundary has nonnegative second variation of area for all preserving volume variations we call it \textit{stable}. In \cite{RV}, Ros and Vergasta studied immersed stable CMC hypersurfaces with free boundary in compact convex domains $M^{n+1}$ of $\mathbb{R}^{n+1}$.  A very natural question, which is still open, in the case that $M^{n+1}$ is a closed ball $B\subset\mathbb{R}^{n+1}$ is to ask whether the only immersed orientable compact stable CMC hypersurfaces with free boundary in $B\subset \mathbb{R}^{n+1}$ are the totally umbilical ones. For $n=2$, Ros and Vergasta \cite{RV} gave the following partial answer.

\begin{theorem}[Ros and Vergasta]\label{RV}
Let $B\subset\mathbb{R}^3$ be a closed ball. If $\Sigma\subset B$ is an immersed orientable compact stable CMC surface with free boundary, then $\partial\Sigma$ is embedded and the only possibilites are
\begin{itemize}
\item[(i)] $\Sigma$ is a totally geodesic disk;
\item[(ii)] $\Sigma$ is a spherical cap;
\item[(iii)] $\Sigma$ has genus 1 with at most two boundary components.
\end{itemize}
\end{theorem}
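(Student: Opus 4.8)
The plan is to combine the second variation inequality with geometrically natural test functions, balanced by a Hersch-type argument, and then to read off the topology through Gauss--Bonnet. Normalize $B$ to be the unit ball centered at the origin, let $N$ be a unit normal of $\Si$ and $A$ its shape operator. Because the ambient space is flat, the constant-mean-curvature and free boundary conditions give the stability inequality
\[
Q(f,f)=\int_{\Si}\bigl(|\nabla f|^{2}-|A|^{2}f^{2}\bigr)\,d\Si-\int_{\partial\Si}f^{2}\,ds\ \ge\ 0,
\]
valid for every $f$ with $\int_{\Si}f\,d\Si=0$; the boundary coefficient equals $1$ because $\partial B$ is the unit sphere and, by orthogonality, the outward conormal $\nu$ of $\partial\Si$ agrees with the position vector $x$ along $\partial\Si$. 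I would first record the resulting boundary identities: $\la x,N\ra=0$ on $\partial\Si$ (the support function vanishes since $x=\nu$ is tangent to $\Si$), and $\la\nabla_{T}T,\nu\ra=\la\nabla_{T}T,x\ra=-1$ for the unit tangent $T$ of $\partial\Si$, so that \emph{$\partial\Si$ has constant geodesic curvature in $\Si$}. The latter makes $\int_{\partial\Si}\kappa_{g}\,ds=-L(\partial\Si)$ a clean input for Gauss--Bonnet.

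The core of the proof is the choice of admissible test functions. The components $u_{i}=\la N,e_{i}\ra$ of the Gauss map satisfy $\Delta u_{i}=-|A|^{2}u_{i}$ and $\sum_{i}u_{i}^{2}=1$, which would make $\sum_{i}\int_{\Si}|A|^{2}u_{i}^{2}=\int_{\Si}|A|^{2}$; however the $u_{i}$ are not of mean zero, and correcting them by their averages reintroduces the uncontrolled vector $\int_{\Si}N\,d\Si$. To remove this obstruction I would run a Hersch-type balancing: viewing $\Si$ as a Riemann surface, uniformize and compose the Gauss map with a suitable conformal automorphism of the model domain (the sphere for genus $0$, and a domain adapted to the boundary components otherwise) so that the three balanced functions have vanishing integral. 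The point is that the two-dimensional Dirichlet energy is conformally invariant under such reparametrizations, so the energy of the balanced functions stays controlled by $\int_{\Si}|A|^{2}$. Feeding these functions into $Q\ge 0$ and summing over $i$ then yields an upper bound for $\int_{\Si}|A|^{2}$ in terms of $|\Si|$ and $L(\partial\Si)$.

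To extract the topology I would combine this estimate with the Gauss equation $2K=H^{2}-|A|^{2}$ and the Gauss--Bonnet theorem $\int_{\Si}K\,d\Si+\int_{\partial\Si}\kappa_{g}\,ds=2\pi\chi(\Si)$. Using $\int_{\partial\Si}\kappa_{g}\,ds=-L(\partial\Si)$ recorded above gives
\[
2\pi\chi(\Si)=\tfrac12 H^{2}|\Si|-\tfrac12\int_{\Si}|A|^{2}\,d\Si-L(\partial\Si),
\]
so the upper bound on $\int_{\Si}|A|^{2}$ becomes a lower bound on $\chi(\Si)=2-2g-b$. This should force $g\le 1$ and, when $g=1$, cap the number of boundary components at two. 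The genus-zero equality analysis then identifies the remaining possibilities: vanishing of $\int_{\Si}|A|^{2}$ (total geodesy) gives the disk (i), while the umbilic case $|A|^{2}=\tfrac12 H^{2}$ gives the spherical cap (ii). Finally, embeddedness of $\partial\Si$ would follow from the boundary analysis, each component being a closed curve on $\partial B$ whose curvature is pinned by the free boundary and CMC conditions, hence a round circle.

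The step I expect to be the main obstacle is the balancing. Translating $\Si$ destroys the free boundary condition and the conformal automorphisms of the ball do not preserve the CMC condition, so the balancing cannot be performed on the surface itself; it must be carried out at the level of the conformal structure of $\Si$, and it is precisely the dependence of this Hersch-type argument on the conformal type of $\Si$ that both produces the genus restriction and prevents a clean treatment of higher genus. Making the continuity/degree argument rigorous, controlling the Dirichlet energy of the balanced functions after reparametrization, and carrying out the sharp equality discussion that isolates the disk and the spherical cap are where the real work lies.
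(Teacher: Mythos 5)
Your outline---stability inequality, balanced sphere-valued test functions, then Gauss equation and Gauss--Bonnet---is exactly the Ros--Vergasta strategy (the paper quotes this theorem from \cite{RV} without reproving it, but its own proof of Theorem \ref{maintheorem} runs along these lines). However, the step you defer as ``the main obstacle'' is not merely hard in your formulation: it is impossible, and this is a fatal gap. You propose to balance the components of the \emph{Gauss map} $N$ so that the balanced functions $\tilde u_i$ still satisfy $\sum_i \tilde u_i^2=1$ and have total Dirichlet energy ``controlled by $\int_\Si |A|^2$.'' If such functions existed, then summing $Q(\tilde u_i,\tilde u_i)\ge 0$ over $i$ would give
\begin{equation*}
\int_\Si |A|^2\,da + L(\partial\Si)\;\le\; \sum_{i=1}^3\int_\Si |\D \tilde u_i|^2\,da\;\le\;\int_\Si |A|^2\,da,
\end{equation*}
i.e.\ $L(\partial\Si)\le 0$, ``proving'' that no stable free boundary CMC surface exists in the ball at all---contradicting the stability of flat equatorial disks and spherical caps. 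The reason no such balancing exists: the Gauss map of a non-minimal CMC surface is harmonic (Ruh--Vilms) but \emph{not conformal}, so post-composing it with M\"obius transformations of the target $\Sph^2$ does not preserve, or even bound, its energy; while pre-composing with conformal automorphisms of the domain does preserve energy but cannot achieve the three balancing conditions (for most topological types the automorphism group is finite, and even for the disk the degenerating limits of $\int_\Si N\circ\sigma\,da$ only sweep the curve of directions $N(q)$, $q\in\partial\Si$, so no degree argument applies). What Ros--Vergasta, and the paper, actually use is a \emph{conformal} map $\psi:\Si\to\Sph^2$ (resp.\ $\Sph^2_+$) whose degree is bounded by the topology---via meromorphic functions on the double in \cite{RV}, via the Ahlfors--Gabard theorem (degree $\le g+r$) in the paper. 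For conformal maps, post-composition with target M\"obius transformations preserves conformality, hence keeps the energy equal to $8\pi\deg\psi$ (resp.\ $\le 4\pi(g+r)$): a \emph{topological} bound. The whole argument consists in playing this topological upper bound against the geometric lower bound $\int_\Si|A|^2\,da+\int_{\partial\Si}\Pi^{\partial\Omega}(N,N)\,dl$ coming from stability; an energy bound by $\int_\Si|A|^2$ itself can never produce information.

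There is also a sign error that would break the bookkeeping even with the correct test maps. Your computation $\la \D_T T, x\ra = -1$ is right, but in the Gauss--Bonnet convention the geodesic curvature is $k_g=-\la \D_T T,\nu\ra=+1$, so $\int_{\partial\Si}k_g\,dl=+L(\partial\Si)$, not $-L(\partial\Si)$ (check on the flat equatorial disk: $K=0$, $\chi=1$, $L=2\pi$; your sign gives $-2\pi=2\pi$). This sign is the crux: with $k_g=\Pi^{\partial B}(T,T)=+1$ the stability/Gauss--Bonnet combination reads $\int_\Si H^2\,da+3L(\partial\Si)\le 8\pi d+4\pi\chi(\Si)$ (this is the paper's inequality \eqref{mainineq} specialized to the ball), and the positivity of the left-hand side is precisely what constrains $(g,r)$; with your sign the left-hand side becomes $\int_\Si H^2\,da - L(\partial\Si)$, which has no sign, and no topological conclusion follows. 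Finally, the closing claim that each component of $\partial\Si$ is a round circle because its curvature is ``pinned by the free boundary and CMC conditions'' is unjustified: the free boundary condition fixes $k_g=\Pi^{\partial B}(T,T)=1$, but the curvature of $\partial\Si$ as a curve in the sphere $\partial B$ equals $\pm A(T,T)$, which is not constant for a general CMC surface; Ros--Vergasta's embeddedness of $\partial\Si$ requires a separate argument, and the identification of cases (i) and (ii) in genus zero goes through showing $r=1$ and invoking Nitsche's theorem \cite{Ni}, not through an equality analysis alone.
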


In order to prove Theorem \ref{RV}, Ros and Vergasta first used a Hersch type balancing argument to conclude that $g\in\{0,1\}$ and $r\in\{1,2\}$ where $g$ and $r$ denotes the genus of $\Sigma$ and the number of connected components of $\partial\Sigma$, respectively. Second, they prove that if $g=0$, then $r=1$. Therefore, in this case, by a theorem of Nitsche \cite{Ni}, $\Sigma$ has to be a totally umbilical disk.

Theorem \ref{RV} is a partial answer because the problem about the existence (or not) of stable CMC surfaces of genus 1 was not solved. Our main goal in this work is to improve the result above by proving that the possibility (iii) does not occur. More precisely, we prove:

\begin{theorem}\label{theoremA}
Let $B\subset\mathbb{R}^3$ be a closed ball. If $\Sigma\subset B$ is an immersed orientable compact stable CMC surface with free boundary, then $\Sigma$ has genus zero.
\end{theorem}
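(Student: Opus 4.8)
The plan is to argue by contradiction using Theorem~\ref{RV}: that result leaves only cases (i)--(iii), and the first two are genus zero, so it suffices to show that case (iii) cannot occur. Thus I assume $\Si$ has genus $1$ with $r\in\{1,2\}$ boundary components and seek to violate stability. After a dilation I normalize $B$ to the closed unit ball; then along $\partial\Si$ the free boundary condition says the outward conormal of $\Si$ equals the position vector, $\nu=x$, and since $\R^3$ is flat ($\Ric\equiv0$) and the inward second fundamental form of $\partial B=\Sph^2$ is the identity, the volume--preserving stability inequality reads $Q(f,f)=\int_\Si\big(|\D f|^2-|A|^2f^2\big)\,\dsigma-\int_{\partial\Si}f^2\,ds\ge0$ for all $f$ with $\int_\Si f\,\dsigma=0$.

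First I would test $Q$ with the three coordinate functions $x_1,x_2,x_3$ of the immersion $x\colon\Si\to B$. Using $\Delta_\Si x=H\,N$ together with the Robin identity $\partial_\nu x_i=x_i$ coming from $\nu=x$, an integration by parts cancels the boundary integrals and leaves $\sum_i Q(x_i,x_i)=-\int_\Si\big(H\la x,N\ra+|A|^2|x|^2\big)\,\dsigma$. Combining this with the pointwise identity $\tfrac12\Delta_\Si|x|^2=2+H\la x,N\ra$, whose integral over $\Si$ contributes the boundary term $\int_{\partial\Si}\la x,\nu\ra\,ds=|\partial\Si|$, I obtain the basic inequality
\[ 2|\Si|-|\partial\Si|\ \ge\ \int_\Si|A|^2|x|^2\,\dsigma, \]
valid whenever the $x_i$ are admissible. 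This is sharp for the flat disk, which is exactly why it alone cannot exclude genus $1$.

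To make the $x_i$ admissible I would run a Hersch balancing: precompose $x$ with the M\"obius automorphisms $\Phi_a$ ($a\in B$) of the unit ball and use a degree argument to choose $a$ with $\int_\Si\Phi_a\circ x\,\dsigma=0$. As each $\Phi_a$ is conformal with factor $\lambda_a>0$ and preserves $\partial B$, the balanced map $y=\Phi_a\circ x$ satisfies $|y|=1$ on $\partial\Si$ and $\sum_i|\D y_i|^2=2\lambda_a^2$, upgrading stability to a conformal--volume inequality $\int_\Si 2\lambda_a^2\,\dsigma\ge|\partial\Si|+\int_\Si|A|^2|y|^2\,\dsigma$. The genus now enters through a \emph{modification} of this balancing: because $\Si$ has genus $1$ its conformal type is non-rational, so it carries nontrivial holomorphic differentials and a conformal branched cover onto the sphere of degree $\ge2$. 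I would use these to manufacture an extra family of admissible test functions whose Dirichlet energy is pinned by conformal invariance and which carries a multiplicity factor $2$. Feeding this gain into $Q$ and combining with Gauss--Bonnet (using that the free boundary makes the geodesic curvature of $\partial\Si$ in $\Si$ equal to $1$, so $\int_\Si K\,\dsigma+|\partial\Si|=2\pi(2-2g-r)$, whence $\int_\Si K\,\dsigma=-2\pi r-|\partial\Si|<0$ for $g=1$), I expect the resulting estimates to be mutually inconsistent.

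The crux, and the step I expect to be the main obstacle, is exactly this modified balancing. I must arrange simultaneously that the genus--adapted test functions have zero mean (so they are legitimate for the volume constraint) and that they are compatible with the free boundary, all while retaining enough control of their Dirichlet energy for the degree--$2$ gain to survive into the final inequality. Matching the three translational parameters of the ball's conformal group against the additional constraints imposed by the holomorphic structure, so that a single configuration produces a strict violation of $Q\ge0$, is the delicate heart of the argument; everything else is bookkeeping with the two identities above and Gauss--Bonnet.
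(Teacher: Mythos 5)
Your setup is sound as far as it goes: the stability form for the unit ball, the identity $\sum_i Q(x_i,x_i)=2|\Sigma|-|\partial\Sigma|-\int_\Sigma|A|^2|x|^2\,da$, the reduction via Theorem \ref{RV} to excluding genus $1$, and the Gauss--Bonnet bookkeeping are all correct. But these are precisely the tools of Ros and Vergasta, i.e.\ the tools that \emph{produced} case (iii) rather than excluded it, and the step you yourself flag as ``the delicate heart'' is exactly what is missing. Worse, the direction you hint at points the wrong way: in a Hersch-type argument a conformal map $\Sigma\to\mathbb{S}^2$ of degree $d$ has Dirichlet energy $8\pi d$, so the degree $\geqslant 2$ forced by genus $1$ \emph{weakens} the energy bound to $16\pi$ --- this is the borderline value that left genus $1$ open in \cite{RV}; there is no ``gain'' from higher multiplicity. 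Your conformal-volume inequality is also not usable: for $y=\Phi_a\circ x$ the energy $\int_\Sigma 2\lambda_a^2\,da$ equals twice the area of $\Phi_a(x(\Sigma))$, which carries no topological bound, so nothing closes.

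The paper's proof rests on two mechanisms absent from your proposal, and they resolve exactly the tension you identified between the mean-zero constraint and the boundary behavior. First, Proposition \ref{prop1}: for a stable free boundary CMC surface in a convex domain, $Q^0(\varphi,\varphi)=\int_\Sigma|\nabla\varphi|^2-|A|^2\varphi^2\,da\geqslant 0$ for \emph{every} $\varphi$ vanishing on $\partial\Sigma$, with no mean-zero requirement; this is proved by testing $Q$ with the Gauss-map coordinates $u_i=\langle e_i,N\rangle$ (which satisfy $Lu_i=0$ and $\sum_i Q(u_i,u_i)=-\int_{\partial\Sigma}\Pi^{\partial\Omega}(N,N)\,dl<0$) and comparing against the first Dirichlet eigenfunction of $L$. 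Second, instead of a conformal map to the whole sphere, one invokes the Ahlfors--Gabard theorem to obtain a \emph{proper} conformal branched cover $\psi=(\psi_1,\psi_2,\psi_3):\Sigma\to\mathbb{S}^2_+$ of degree at most $g+r$, hence of energy at most $4\pi(g+r)$ --- half the cost of a sphere map of the same degree. Properness sends $\partial\Sigma$ to the equator, so $\psi_3=0$ there: only $\psi_1,\psi_2$ need balancing (two constraints, handled by conformal automorphisms of the hemisphere), while $\psi_3$ is admissible through Proposition \ref{prop1} rather than through the volume constraint. Summing the three resulting inequalities and using $|A|^2=H^2-2K$ with Gauss--Bonnet gives the key estimate \eqref{mainineq}, namely $\int_{\partial\Sigma}\Pi^{\partial\Omega}(N,N)\,dl+2\int_{\partial\Sigma}k_g\,dl+\int_\Sigma H^2\,da\leqslant 4\pi(2-g)$; Volkmann's free-boundary Willmore inequality \eqref{volkmannineq} then forces the left-hand side to exceed $4\pi$ strictly unless $\Sigma$ is a spherical cap or flat disk, whence $g=0$. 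In short, your proposal correctly locates the difficulty but supplies neither of the two ideas (the Dirichlet-stability proposition and the hemisphere-valued branched cover) that overcome it, so the gap is genuine.
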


As a corollary of Theorems \ref{RV} and \ref{theoremA} we have the following complete classification of immersed compact stable CMC surfaces with free boundary in closed balls of $\mathbb{R}^3$.

\begin{corollary}\label{cor}
The totally umbilical disks are the only immersed orientable compact stable CMC surfaces in a closed ball $B\subset \mathbb{R}^3$
\end{corollary}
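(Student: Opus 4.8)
The plan is to argue by contradiction: assuming $\Sigma$ has genus $1$, I will produce an admissible (volume-preserving) test function for which the second variation is negative, contradicting stability. By Theorem \ref{RV} the only remaining case is genus $1$ with $r\in\{1,2\}$ boundary components and $\partial\Sigma$ embedded, so ruling this out suffices. Normalize $B$ to be the ball of radius $R$ centered at the origin. Along $\partial\Sigma$ the outward conormal $\nu$ coincides with $x/R$ and, since $\Sigma$ meets the umbilic sphere $\partial B$ orthogonally, $\nu$ is a principal direction of $\Sigma$ and the geodesic curvature of $\partial\Sigma$ in $\Sigma$ equals the constant $1/R$. As $\Ric\equiv 0$ in $\mathbb R^3$, the stability inequality for a volume-preserving variation with normal component $f$, $\int_\Sigma f\,dA=0$, reads
\begin{equation}\label{stab}
Q(f,f)=\int_\Sigma\big(|\nabla f|^2-|A|^2f^2\big)\,dA-\frac1R\int_{\partial\Sigma}f^2\,ds\ \ge\ 0 .
\end{equation}
I would first check that this is marginally saturated by the totally geodesic disk (the three Steklov-type extremizers being the coordinate functions), which is exactly the configuration the test functions below are designed to detect.

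First I would fix the test functions. Rather than the Gauss map, which for a CMC surface is harmonic but not conformal and whose Dirichlet energy is therefore not preserved under balancing, I would take a conformal map $\phi=(\phi_1,\phi_2,\phi_3)\colon(\Sigma,\partial\Sigma)\to(\overline{B^3},\Sph^2)$ coming from the conformal structure of the genus-$1$ surface, so that $|\phi|^2\le 1$ with equality on $\partial\Sigma$. The virtue of conformality is that $\sum_i|\nabla\phi_i|^2=|\nabla\phi|^2=2\,\mathrm{Area}(\phi)$ is a conformal invariant, unchanged under post-composition with a conformal automorphism of $B^3$. The modified Hersch balancing is precisely this step: applying the topological (fixed-point) argument to the three-parameter family of conformal automorphisms of the ball, I would replace $\phi$ by $g\circ\phi$ so that each component is admissible, $\int_\Sigma\phi_i\,dA=0$, while preserving both the energy and the identity $\sum_i\phi_i^2=1$ on $\partial\Sigma$.

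Summing \eqref{stab} over $i=1,2,3$ and using $\sum_i\phi_i^2\le 1$ in the interior, $=1$ on $\partial\Sigma$, then yields
\begin{equation}\label{key}
\int_\Sigma|\nabla\phi|^2\,dA\ \ge\ \int_\Sigma|A|^2|\phi|^2\,dA+\frac{L(\partial\Sigma)}{R}.
\end{equation}
Next I would inject the topology through Gauss--Bonnet. Writing $|A|^2=4H^2-2K$ and using $k_g\equiv 1/R$ gives
\begin{equation}\label{GB}
\int_\Sigma K\,dA=2\pi\,(2-2g-r)-\frac{L(\partial\Sigma)}{R},
\end{equation}
so the right-hand side of \eqref{key} is bounded below by an expression that grows linearly in the genus $g$ and in the boundary length. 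On the other side, the left-hand side is twice the area of a conformal branched image of $\Sigma$ in the ball, which I would bound from above in terms of $g$ and $r$ by choosing $\phi$ of least energy in its conformal class, via the Li--Yau/Fraser--Schoen conformal-volume theory; for genus $1$ this is the borderline degree-$2$ (elliptic/Weierstrass) case.

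The hard part, and the reason genus $1$ is delicate, is that these two bounds agree to leading order, so \eqref{key} degenerates to an equality at first order and the crude estimate returns only $Q\ge0$ rather than a strict sign. To close the argument I would analyze the equality case: saturation in \eqref{key} forces $\phi$ to be, up to the balancing, a harmonic conformal (hence minimal branched) map with $|\phi|^2\equiv1$ on $\partial\Sigma$ and $|A|^2$ controlled by the pulled-back round metric, which, combined with the constancy of $k_g$, the sharp form of \eqref{GB} for $g=1$, and the embeddedness of $\partial\Sigma$ from Theorem \ref{RV}, over-determines $\Sigma$. I expect this rigid configuration to be incompatible with $\Sigma$ being a genuine genus-$1$ CMC surface meeting $\partial B$ orthogonally, yielding the contradiction and hence $g=0$. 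Pinning down this rigidity—verifying that the optimal conformal map has exactly the energy forced by $g=1$ and that the resulting equality configuration cannot be realized—is where the main work lies.
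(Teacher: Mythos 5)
Your reduction is fine: by Theorem \ref{RV} it suffices to exclude genus $1$, and your stability inequality with $\Pi^{\partial B}(N,N)=k_g=1/R$ is set up correctly. But the proposal is not a proof. The decisive step --- showing that the genus-$1$ ``equality configuration'' cannot exist --- is exactly where you stop, writing that you ``expect'' incompatibility and that ``this is where the main work lies.'' Moreover, the quantitative skeleton you build has two concrete defects. First, with a conformal map $\phi$ into the closed ball you only know $|\phi|\leqslant 1$ in the interior, so the term $\int_\Sigma|A|^2|\phi|^2\,da$ in your summed stability inequality cannot be bounded below by $\int_\Sigma|A|^2\,da$; hence Gauss--Bonnet cannot be injected the way you claim, and the right-hand side of your key inequality does not ``grow linearly in the genus.'' (Ros--Vergasta avoid this by capping off boundary components to get a map into $\mathbb{S}^2$, where $|\phi|\equiv 1$; but that route costs energy $8\pi(1+[(g+1)/2])$ and in the ball yields only $g\in\{0,1\}$ --- i.e.\ it reproduces Theorem \ref{RV} and provably cannot exclude $g=1$.) Second, for ball-valued maps there is no degree, so the Dirichlet energy is not topologically quantized; the Li--Yau/conformal-volume bounds you invoke are not sharp enough at genus $1$ to give a strict inequality, which is precisely why, as you concede, your argument ``degenerates to an equality at first order.''

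The paper closes the gap with two ingredients absent from your proposal. (a) Proposition \ref{prop1}: stability plus convexity of the domain forces $Q^0(\varphi,\varphi)=\int_\Sigma|\nabla\varphi|^2-|A|^2\varphi^2\,da\geqslant 0$ for \emph{every} $\varphi$ vanishing on $\partial\Sigma$, with no volume constraint; this follows by playing the Jacobi functions $u_i=\langle e_i,N\rangle$, which satisfy $\sum_iQ(u_i,u_i)=-\int_{\partial\Sigma}\Pi^{\partial\Omega}(N,N)\,dl<0$, against the first Dirichlet eigenfunction of the Jacobi operator. (b) Gabard's theorem: there is a \emph{proper} conformal branched cover $\psi:\Sigma\to\mathbb{S}^2_+$ of degree at most $g+r$, so the energy bound $4\pi(g+r)$ is topological and $\psi_3$ vanishes on $\partial\Sigma$. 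One then balances only $\psi_1,\psi_2$ (a two-parameter balancing inside the disk), feeds $\psi_3$ through (a), and Gauss--Bonnet gives \eqref{mainineq}. The strict gap needed to exclude $g=1$ (and $r\geqslant 3$) is then supplied not by an ad hoc rigidity analysis but by quoting Volkmann's sharp inequality \eqref{volkmannineq}, whose equality case is exactly the spherical caps and flat disks. If you wanted to salvage your outline, the rigidity statement you defer is essentially equivalent to this inequality, so you should expect to need it (or something equally sharp) as an input rather than as a conclusion.
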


\begin{remark}
Corollary \ref{cor} can be regarded as the result  analogous to  Barbosa and do Carmo's theorem \cite{BdC} for immersed closed stable CMC surfaces in the Euclidean space. Since the latter holds for any dimension, we should expect the same for stable CMC surfaces with free boundary.
\end{remark}

In fact, Theorem \ref{theoremA} is a consequence of the following more general result.

\begin{theorem}\label{maintheorem}
Let $\Omega\subset\mathbb{R}^3$ be a smooth compact convex domain. Suppose  that the second fundamental form $\Pi^{\partial\Omega}$ of $\partial\Omega$ satisfies the pinching condition
\begin{equation}\label{pinching}
k\, h\leqslant \Pi^{\partial\Omega}\leqslant (3/2)\,k\, h
\end{equation}
for some constant $k>0$, where $h$ denotes the induced metric on $\partial\Omega$ 
. If $\Sigma\subset\Omega$ is an immersed orientable compact stable CMC surface with free boundary, then $\Sigma$ has genus zero and $\partial\Sigma$ has at most two connected components.
\end{theorem}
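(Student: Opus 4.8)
The plan is to test the stability inequality with balanced coordinate functions of a conformal map to $S^2$ and to convert the curvature term into topological data via Gauss--Bonnet. Since the ambient space is flat, stability reads
\[
Q(f,f)=\int_\Sigma\big(|\D f|^2-|A|^2f^2\big)\,dA-\int_{\partial\Sigma}\Pi^{\partial\Omega}(N,N)\,f^2\,ds\ge0
\]
for every $f$ with $\int_\Sigma f\,dA=0$, where $A$ is the second fundamental form of $\Sigma$ and $N$ its unit normal. Two elementary identities feed the argument. First, writing $H=\tfrac12\tr A$ one has $|A|^2=4H^2-2K$, so Gauss--Bonnet gives
\[
\int_\Sigma|A|^2\,dA=4H^2\,|\Sigma|-4\pi\chi(\Sigma)+2\int_{\partial\Sigma}\kappa_g\,ds,
\]
with $\chi(\Sigma)=2-2g-r$. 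Second, along $\partial\Sigma$ the orthogonality condition makes $\{\tau,N\}$ an orthonormal frame of $T(\partial\Omega)$, where $\tau$ is the unit tangent of $\partial\Sigma$; a short computation then identifies the geodesic curvature of $\partial\Sigma$ in $\Sigma$ with $\kappa_g=\Pi^{\partial\Omega}(\tau,\tau)$. Thus both boundary contributions, $\Pi^{\partial\Omega}(\tau,\tau)$ and $\Pi^{\partial\Omega}(N,N)$, are governed by the pinching hypothesis.

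For the test functions I would invoke the conformal/topological input: a compact surface of genus $g$ with $r$ boundary components carries a proper conformal branched cover onto the unit disk of degree $d\le g+r$ (Ahlfors--Gabard); composing with a conformal identification of the disk with a hemisphere produces a conformal map $\Phi=(\Phi_1,\Phi_2,\Phi_3)\colon\Sigma\to S^2$ with $\sum_i\Phi_i^2\equiv1$ and Dirichlet energy $\int_\Sigma|\D\Phi|^2=2\,\mathrm{Area}(\Phi)=4\pi d$. A Hersch-type balancing --- composing $\Phi$ with a suitable conformal automorphism of $S^2$ --- then arranges $\int_\Sigma\Phi_i\,dA=0$ for $i=1,2,3$, so each $\Phi_i$ is admissible in $Q$.

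Summing $Q(\Phi_i,\Phi_i)\ge0$ over $i$ and using $\sum_i\Phi_i^2\equiv1$ collapses the curvature and boundary terms into $\int_\Sigma|A|^2\,dA+\int_{\partial\Sigma}\Pi^{\partial\Omega}(N,N)\,ds\le4\pi d$. Inserting the Gauss--Bonnet identity and $\kappa_g=\Pi^{\partial\Omega}(\tau,\tau)$ gives
\[
4H^2|\Sigma|-4\pi(2-2g-r)+\int_{\partial\Sigma}\big(2\,\Pi^{\partial\Omega}(\tau,\tau)+\Pi^{\partial\Omega}(N,N)\big)\,ds\le4\pi d.
\]
Here the pinching enters decisively: the lower bound $\Pi^{\partial\Omega}\ge k\,h$ controls the boundary integrand from below by $3k$, while the upper bound $\Pi^{\partial\Omega}\le\tfrac32k\,h$ --- together with the refined (``modified'') balancing --- is what keeps the estimate sharp enough to force $g=0$ and $r\le2$. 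The main obstacle is precisely this last step. The crude estimates (discarding $4H^2|\Sigma|\ge0$, bounding the boundary term below by $3k\,L(\partial\Sigma)$, and using $d\le g+r$) reprove only the weaker conclusion $g\le1$ of Ros and Vergasta and give no bound on $r$; excluding the borderline genus-one case --- the genuinely new content --- requires extracting additional sharp information, and correctly tuning the balanced test functions so that the topological energy bound, the Gauss--Bonnet/Willmore term, and the two-sided control of $\Pi^{\partial\Omega}$ close up simultaneously is the heart of the proof.
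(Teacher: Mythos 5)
Your proposal correctly assembles the classical ingredients (Ahlfors--Gabard cover, Gauss--Bonnet, $\kappa_g=\Pi^{\partial\Omega}(\tau,\tau)$ from the free boundary condition), but it breaks down at the balancing step, and this is not a technicality. Your energy bound $\int_\Sigma|\nabla\Phi|^2=4\pi d$ holds precisely because the image of $\Phi$ is the hemisphere ($d$ sheets of area $2\pi$). To arrange $\int_\Sigma\Phi_i\,dA=0$ for \emph{all three} coordinates you must compose with conformal automorphisms of $S^2$ that do not preserve the hemisphere (indeed, since $\Phi_3\geqslant 0$, balancing the third coordinate forces the image out of $\mathbb{S}^2_+$ entirely). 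But for a conformal map the energy equals twice the area of the image counted with multiplicity, so after composing with such a M\"obius transformation the energy is only bounded by $2d\cdot\mathrm{Area}(T(\mathbb{S}^2_+))$, which can approach $8\pi d$; the Hersch fixed-point argument gives you no control over which $T$ you get. With the honest bound $8\pi(g+r)$ your chain of inequalities yields
\begin{equation*}
4H^2|\Sigma|+\int_{\partial\Sigma}\bigl(2\,\Pi^{\partial\Omega}(\tau,\tau)+\Pi^{\partial\Omega}(N,N)\bigr)\,ds\leqslant 8\pi(g+r)+4\pi(2-2g-r)=4\pi(2+r),
\end{equation*}
which contains no genus information at all. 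The paper's route around exactly this obstruction is its Proposition 2.1, which you are missing: for a stable free boundary CMC surface in a \emph{convex} domain, $\int_\Sigma|\nabla\varphi|^2-|A|^2\varphi^2\,da\geqslant 0$ for every $\varphi$ vanishing on $\partial\Sigma$, with no mean-zero condition (proved by testing $Q$ against the Gauss-map coordinates $u_i=\langle e_i,N\rangle$, which are Jacobi fields, and showing $Q(u_i,u_i)<0$ for some $i$). This allows one to balance only $\psi_1,\psi_2$, using conformal diffeomorphisms of $\mathbb{S}^2_+$ itself (disk automorphisms), which \emph{do} preserve the bound $4\pi(g+r)$, and to use $\psi_3$ --- which vanishes on $\partial\Sigma$ since the Gabard cover is proper --- as a Dirichlet test function. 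Then the $r$-dependence cancels: $4\pi(g+r)+4\pi(2-2g-r)=4\pi(2-g)$.

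The second gap is the endgame, which you explicitly acknowledge you cannot close. Your guess that the upper pinching bound $\Pi^{\partial\Omega}\leqslant\tfrac32 k\,h$ enters through a ``tuning of the balanced test functions'' is wrong: it enters through Volkmann's inequality (Corollary 5.8 of \cite{Vo}), which under that hypothesis gives
\begin{equation*}
\frac14\int_\Sigma H^2\,da+\frac32\,k\,L(\partial\Sigma)\geqslant 2\pi,
\end{equation*}
with equality if and only if $\Sigma$ is a spherical cap or a flat unit disk. Writing the main inequality as $4\pi(2-g)\geqslant\frac12\int_\Sigma H^2\,da+2\bigl(\frac14\int_\Sigma H^2\,da+\frac32 k\,L(\partial\Sigma)\bigr)$, one either has equality in Volkmann's inequality (and is done), or strict inequality, which forces $4\pi(2-g)>4\pi$, i.e.\ $g=0$; the same strict inequality fed into the Ros--Vergasta balancing argument gives $16\pi-4\pi r>4\pi$, i.e.\ $r\leqslant 2$. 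Without Proposition 2.1 your argument cannot even recover $g\leqslant 1$, and without Volkmann's rigidity statement no amount of the estimates you list will exclude the genus-one case.
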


Let us give an ideia of the proof of Theorem \ref{maintheorem}. We first note (Proposition \ref{prop1}) that the stability of $\Sigma$ implies that the quadratic form given by the second variation of area is nonnegative for \textit{all} functions $\varphi$ such that $\varphi=0$ on $\partial\Sigma$ regardless of whether it satisfies $\int_\Sigma \varphi\,da=0$ or not. This fact allows us to apply a \textit{modified} Hersch type balancing argument which gives a better control on the genus of $\Sigma$. More precisely, instead of attaching a conformal disk at any connected component of $\partial\Sigma$, as in \cite{RV}, in order to find a conformal map $\psi=(\psi_1,\psi_2,\psi_3):\Sigma\to\mathbb{S}^2$ such that$\int_\Sigma\psi_i\,da=0$, for $i=1,2,3$, and having Dirichlet energy less than $8\pi(1+[(g+1)/2])$, where $g$ and $[x]$ stands for the genus of $\Sigma$ and the greatest integer less than or equal to $x$, respectively, we use as test functions the coordinates of a conformal map $\psi=(\psi_1,\psi_2,\psi_3):\Sigma\to\mathbb{S}^2_+$  satisfying $\int_\Sigma\psi_i\,da=0$ for $i=1,2,$ and having Dirichlet energy less than or equal to $4\pi(g+r)$, where $r$ denotes the number of connected components of $\partial\Sigma$. The key point here is that, since $\psi_3=0$ on $\partial\Sigma$, we are able to use $\psi_3$ as a test function because Proposition \ref{prop1}.

\begin{remark}
We note that the Hersch balance argument used by Ros and Vergasta proves that if $\Sigma$ is an immersed orientable compact stable CMC surface with free boundary in a compact convex domain $\Omega\subset\mathbb{R}^3$, then either $g\in\{1,2\}$ and $r=\{1,2,3\}$ or $g\in\{2,3\}$ and $r=1$ (cf. \cite{RV}, Theorem 5). In \cite{Ros}, Ros improved that result by using harmonic $1$-forms. More precisely, he proved that if $\Sigma$ is a stable CMC surface with free boundary in a \textit{mean} convex domain $\Omega\subset\mathbb{R}^3$, then either $g=0$ and $r\in\{1,2,3,4\}$ or $g=1$ and $r\in\{1,2\}$ (cf. Theorem 9 of \cite{Ros}). Note that if $\Omega$ is convex, then the case $g=0$ and $r=4$ does not occur. It is worth noting that our methods (see inequality \eqref{mainineq}) also imply that a stable CMC surface $\Sigma$ with free boundary in a convex domain $\Omega\subset\mathbb{R}^3$ cannot have genus $g=2$ or $g=3$.
\end{remark}

\begin{remark}
In \cite{So}, Souam dealt with immersed compact stable CMC surfaces with free boundary in compact domains of $\mathbb{S}^{n+1}$  and $\mathbb{H}^{n+1}$. Because of Barbosa, do Carmo and Eschenburg's theorem  \cite{BdCE} for closed immersed stable CMC surfaces in $\mathbb{S}^{n+1}$ and $\mathbb{H}^{n+1}$ it is natural to ask if the fact in Corollary \ref{cor} also holds for stable CMC surfaces $\Sigma$ with free boundary in balls $B$ of $\mathbb{S}^{n+1}$ or $\mathbb{H}^{n+1}$. As in \cite{RV}, Souam \cite{So} (cf. Theorem 5.1) proved that this is true in the case $\Sigma$ is has genus zero and $n=2$. It easy to see that our methods can be adapted in order to prove that an immersed orientable compact CMC surface $\Sigma\subset B$ with free boundary has genus zero in the case that either $B$ is  a ball of radius $r<\pi/2$ in $\mathbb{S}^{3}$ or $B$ is a ball of $\mathbb{H}^3$ and $|H|\geqslant 1$, where $H$ is the mean curvature of $\Sigma$. We also note that Souam proved (cf. \cite{So}, Theorem 3.1) that, for any dimension, a stable CMC hypersurface with free boundary in the hemisphere $\mathbb{S}^{n+1}_+$ has to be totally umbilical.
\end{remark}


\subsection*{Acknowledgements} The author would like to thank Andr\'e Neves and Fernando Cod\'a Marques for many useful mathematical conversations. The author was financially supported by CNPq-Brazil.

\section{Proof of Theorem \ref{maintheorem}}

Let $\Omega\subset\mathbb{R}^3$ be a smooth compact domain. Let  $\eta$ be the outward normal unit vector field along $\partial\Omega$ and let $\Pi^{\partial\Omega}$ be the second fundamental form of $\partial\Omega$ with respect to $\eta$, that is, $\Pi^{\partial\Omega}(X,Y)=\langle D_X\eta,Y\rangle$ for all tangent vectors $X,Y\in T(\partial\Omega)$, where $D$ denotes the Levi-Civita connection of the Euclidean space $\mathbb{R}^3$. We say that $\Omega$ is \textit{convex} if $\Pi^{\partial\Omega}(X,X)>0$ for all tangent vector $X\in T(\partial\Omega)$, $X\neq 0$.

Let $\Sigma\subset\Omega$ be an immersed orientable compact surface with nonempty boundary such that $\partial\Sigma=\Sigma\cap\partial\Omega$. We say that $\Sigma$ is a \textit{constant mean curvature (CMC) surface with free boundary} if $\Sigma$ is a critical point of the area functional among all surfaces $\widehat{\Sigma}\subset\Omega$ such that $\partial\widehat{\Sigma}=\widehat{\Sigma}\cap\partial\Omega$ which divides $\Omega$ into two subsets with prescribed volumes. We say that $\Sigma$ is \textit{stable} if it has nonnegative second variation of area for all preserving volume variations. By using the formulae of first and second variation of area, this means that
\begin{itemize}
\item[(i)] The mean curvature $H$ of $\Sigma$ is constant;
\item[(ii)] $\Sigma$ meets $\partial\Omega$ orthogonally along $\partial\Sigma;$
\item[(iii)] (Stability) The inequality
$$
Q(\varphi,\varphi)=\int_\Sigma|\nabla\varphi|^2-|A|^2\varphi^2\,da-\int_{\partial\Sigma}\Pi^{\partial\Omega}(N,N)\varphi^2\,dl\geqslant 0,
$$
holds for all functions $\varphi\in H^1(\Sigma)$ ,  such that $\int_\Sigma \varphi \,da=0$, where $H^1(\Sigma)$, $N$ and $A$ denote the first Sobolev space of $\Sigma$, the normal unit vector field along $\Sigma$ and the second fundamental form of $\Sigma$ with respect to $N$, respectively.
\end{itemize}

If we denote by $\nu$ the outward unit conormal of $\partial\Sigma$ tangent to $\Sigma$, then the condition (ii) above means that $\nu(p)=\eta(p)$ for all $p\in\partial\Sigma$.

In order to prove Theorem \ref{maintheorem} we need the following very useful proposition.

\begin{proposition}\label{prop1}
Let $\Omega\subset\mathbb{R}^3$ be a compact convex domain. If $\Sigma\subset\Omega$ is an immersed stable CMC surface with free boundary, then
\begin{equation}\label{ineq1}
Q^0(\varphi,\varphi)=\int_\Sigma|\nabla\varphi|
^2-|A|^2\varphi^2\,da\geqslant 0
\end{equation}
for all $\varphi$ such that $\varphi=0$ on $\partial\Sigma$.
\end{proposition}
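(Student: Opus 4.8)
The plan is to split the statement into an easy part and a hard part after one geometric observation. Along $\partial\Sigma$ the free boundary condition gives $\nu=\eta$, hence $N\perp\eta$, so $N$ is tangent to $\partial\Omega$; since $\Omega$ is convex this forces $\Pi^{\partial\Omega}(N,N)>0$ on $\partial\Sigma$. Writing the two forms as $Q^0(\varphi,\varphi)=Q(\varphi,\varphi)+\int_{\partial\Sigma}\Pi^{\partial\Omega}(N,N)\varphi^2\,dl$, I read off two facts. First, for any $\psi$ with $\int_\Sigma\psi\,da=0$, stability gives $Q(\psi,\psi)\ge0$ and the boundary integral is $\ge0$, so $Q^0(\psi,\psi)\ge0$; thus the claim already holds on the mean-zero class. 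Second, if $\varphi=0$ on $\partial\Sigma$ the boundary integral vanishes and $Q^0(\varphi,\varphi)=Q(\varphi,\varphi)$. So everything reduces to showing $Q(\varphi,\varphi)\ge0$ for every $\varphi$ vanishing on $\partial\Sigma$, \emph{dropping} the constraint $\int_\Sigma\varphi\,da=0$.

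I would prove this reduced claim by contradiction, in spectral form. Let $\lambda_1$ be the lowest Dirichlet eigenvalue of $-\Delta-|A|^2$ on $\Sigma$; the claim is exactly $\lambda_1\ge0$. Suppose $\lambda_1<0$ and let $u>0$ be its eigenfunction, so $u=0$ on $\partial\Sigma$ and $Q(u,u)=Q^0(u,u)=\lambda_1\int_\Sigma u^2\,da<0$. Thus $u$ is a strictly negative direction for the \emph{unconstrained} form $Q$ on $H^1(\Sigma)$. On the other hand, stability says $Q\ge0$ on the hyperplane $\{\int_\Sigma\varphi\,da=0\}$, and a symmetric form that is nonnegative on a subspace of codimension one has at most a one-dimensional negative cone. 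The strategy is therefore to reach a contradiction by exhibiting a \emph{two}-dimensional subspace of $H^1(\Sigma)$ on which $Q$ is negative definite.

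For a second negative direction I would exploit the favourable sign of the boundary term: any $\chi$ with nonzero trace on $\partial\Sigma$ feels the contribution $-\int_{\partial\Sigma}\Pi^{\partial\Omega}(N,N)\chi^2\,dl<0$. The constant function is the natural candidate, since $Q(1,1)=-\int_\Sigma|A|^2\,da-\int_{\partial\Sigma}\Pi^{\partial\Omega}(N,N)\,dl<0$; replacing it by $\chi=1-tu$ with $t=Q(1,u)/Q(u,u)$ makes $Q(u,\chi)=0$, so $Q$ is diagonal on $\mathrm{span}\{u,\chi\}$ with first entry $Q(u,u)<0$. The argument then hinges on the Schur complement $Q(\chi,\chi)=Q(1,1)-Q(1,u)^2/Q(u,u)$, i.e. on controlling the cross term $Q(1,u)=-\int_\Sigma|A|^2u\,da$. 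This is the main obstacle: an abstract estimate does not decide its sign, so in the borderline case where the Dirichlet index is only one I expect to need the ambient geometry of $\Sigma\subset\mathbb{R}^3$ — in particular the translation Jacobi fields $\langle N,e\rangle$, which satisfy $\Delta\langle N,e\rangle+|A|^2\langle N,e\rangle=0$ and whose conormal derivatives along $\partial\Sigma$ are governed by $\Pi^{\partial\Omega}$ — to produce a genuinely independent second negative direction and force the negative index of $Q$ to be at least two.
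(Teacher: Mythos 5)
Your setup is correct as far as it goes: the identity $Q^0(\varphi,\varphi)=Q(\varphi,\varphi)$ for $\varphi|_{\partial\Sigma}=0$, the reformulation as nonnegativity of the first Dirichlet eigenvalue of $L=\Delta_\Sigma+|A|^2$, and the observation that stability (nonnegativity of $Q$ on a codimension-one subspace) would be contradicted by any two-dimensional subspace on which $Q$ is negative definite. But there is a genuine gap at exactly the step you flag yourself: you never construct the second negative direction. The constant function cannot do the job, because the cross term $Q(1,u)=-\int_\Sigma|A|^2u\,da$ is uncontrollable, so the sign of the Schur complement $Q(1,1)-Q(1,u)^2/Q(u,u)$ is undetermined (since $Q(u,u)<0$, the subtracted term is nonnegative and works against you). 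Gesturing at the translation Jacobi fields ``to force the negative index to be at least two'' is not an argument; it is precisely the statement that needs proof.

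What closes the gap --- and is the actual content of the paper's proof --- is a concrete computation with the fields $u_i=\langle N,e_i\rangle$, $i=1,2,3$, which you only name in passing. Two facts do all the work. First, since $Lu_i=0$, integration by parts gives $Q(u_i,w)=Q^0(u_i,w)=-\int_\Sigma w\,Lu_i\,da=0$ for \emph{every} $w$ vanishing on $\partial\Sigma$: the troublesome cross term vanishes identically for these fields, with no estimate needed. Second, some $u_i$ is itself a negative direction: by $Lu_i=0$ and integration by parts,
\begin{equation*}
Q(u_i,u_i)=\frac{1}{2}\int_{\partial\Sigma}\frac{\partial}{\partial\nu}\bigl(u_i^2\bigr)\,dl-\int_{\partial\Sigma}\Pi^{\partial\Omega}(N,N)\,u_i^2\,dl,
\end{equation*}
and since $\sum_{i=1}^3u_i^2=|N|^2\equiv1$, summing over $i$ annihilates the conormal-derivative terms and yields $\sum_{i=1}^3Q(u_i,u_i)=-\int_{\partial\Sigma}\Pi^{\partial\Omega}(N,N)\,dl<0$ by convexity (your own observation that $N$ is tangent to $\partial\Omega$ along $\partial\Sigma$). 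Hence $Q(u_i,u_i)<0$ for some $i$. Together with your Dirichlet eigenfunction $u$ (under the hypothesis $\lambda_1<0$), the span of $\{u,u_i\}$ is two-dimensional (if $u_i=cu$ then $0=Lu_i=-c\lambda_1u$, so $c=0$, contradicting $Q(u_i,u_i)<0$), and $Q$ is diagonal on it with both diagonal entries negative --- the contradiction you were after. The paper itself phrases this without contradiction: stability plus $Q(u_i,u_i)<0$ forces $\int_\Sigma u_i\,da\neq0$, one scales the positive first Dirichlet eigenfunction $v$ so that $\int_\Sigma v\,da=\int_\Sigma u_i\,da$, and stability applied to $v-u_i$ gives $0\leqslant Q(v-u_i,v-u_i)<Q^0(v,v)$, which is the desired conclusion.
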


\begin{proof}
Consider an orthormal basis $\{e_1,e_2,e_3\}$ of $\mathbb{R}^3$ and define $u_i(x)=\langle e_i,N(x)\rangle$, $x\in\Sigma$, for each $i=1,2,3$.

A direct computation shows that
$$
L u_i=0
$$
for all $i=1,2,3$, where $L=\Delta_\Sigma+|A|^2$ is the Jacobi operator of $\Sigma$.

Therefore, we have 
\begin{align*}
Q(u_i,u_i)&=\int_{\partial\Sigma}u_i\frac{\partial u_i}{\partial \nu}\,dl-\int_{\partial\Sigma}\Pi^{\partial\Omega}(N,N) u_i^2\,dl\\
&=\frac{1}{2}\int_{\partial\Sigma}\frac{\partial}{\partial\nu}(u_i^2)\,dl-\int_{\partial\Sigma}\Pi^{\partial\Omega}(N,N)u_i^2\,dl
\end{align*}
for all $i=1,2,3$.

By summing over $i=1,2,3$, we get that
\begin{align*}
\sum_{i=1}^3 Q(u_i,u_i)&=\frac{1}{2}\int_{\partial\Sigma}\frac{\partial}{\partial\nu}\left(\sum_{i=1}^3u_i^2\right)\,dl-\int_{\partial\Sigma}\Pi^{\partial\Omega}(N,N)\left(\sum_{i=1}^3u_i^2\right)\,dl\\
&=-\int_{\partial\Sigma}\Pi^{\partial\Omega}(N,N)\,dl<0
\end{align*}
since $\sum_{i=1}^3u_i^2=|N|^2=1$.

Thus, there exists $i\in\{1,2,3\}$ such that $Q(u_i,u_i)<0$. In particular, we have $\int_\Sigma u_i\,da\neq 0$ because the stability of $\Sigma$.

Now, let $v$ be the first eigenfunction of $L$ with Dirichlet boundary condition, that is, $v=0$ on $\partial\Sigma$. After multiplying $v$ by a constant we can assume that $\int_\Sigma v\,da=\int_{\Sigma}u_i\,da$. Note that, since $v=0$ on $\partial\Sigma$ and $Q(u_i,u_i)<0$, we have that $v-u_i\neq 0$.

Thus, since $\Sigma$ is a stable CMC surface with free boundary, we obtain that
\begin{align*}
0&\leqslant Q(v-u_i,v-u_i)\\
&=Q^0(v,v)-2Q^0(v,u_i)+Q(u_i,u_i)\\
&<Q^0(v,v),
\end{align*}
where we have used above that $Q(u_i,u_i)<0$ and $Q^0(v,u_i)=0$, because $Lu_i=0$ and $v=0$ on $\partial\Sigma$. This finishes the proof.
\end{proof}

From now on we consider a compact convex domain $\Omega\subset\mathbb{R}^3$ satisfying the pinching condition \eqref{pinching} and an immersed orientable compact stable CMC surface $\Sigma\subset\Omega$ with free boundary. We denote by $g$ the genus of $\Sigma$ and by $r$ the number of connected components of $\partial\Sigma$.



\begin{proof}[Proof of Theorem \ref{maintheorem}]
By a result of Gabard \cite{Ga}, which improved a previous result due to Alfhors \cite{Ahl}, there exists a proper conformal branched cover $\psi:\Sigma\to \mathbb{D}^2$, where $\mathbb{D}^2\subset\mathbb{R}^2$ is the closed unit disk, of degree at most $g+r$. Since $\mathbb{D}^2$ is conformally equivalent to the hemisphere $\mathbb{S}^2_+\subset\mathbb{R}^3$ of radius one, we can assume that $\psi=(\psi_1,\psi_2,\psi_3):\Sigma\to \mathbb{S}^2_+$. Moreover, by using conformal diffeomorphisms of $\mathbb{S}^2_+$, we may suppose that
$$
\int_\Sigma\psi_i\,da=0
$$
for $i=1,2$.

Thus, we have 
\begin{equation}\label{ineq2}
\int_{\partial\Sigma}\Pi^{\partial\Omega}(N,N)\,dl+\sum_{i=1}^2\int_\Sigma|A|^2\psi_i^2\,da\leqslant \sum_{i=1}^2\int_\Sigma|\nabla\psi_i|^2\,da.
\end{equation}

Since $\psi_3=0$ on $\partial\Sigma$, we get by Proposition \ref{prop1} that
\begin{equation}\label{ineq3}
\int_\Sigma |A|^2\psi_3^2\,da\leqslant\int_\Sigma|\nabla\psi_3|^2\,da.
\end{equation}

By adding \eqref{ineq2} and \eqref{ineq3} and using that $|A|^2=H^2-2K$ we obtain 
\begin{align*}
\int_{\partial\Sigma}\Pi^{\partial\Omega}(N,N)\,dl+\int_\Sigma H^2\,&da\leqslant \int_\Sigma|\nabla\psi|^2\,da+2\int_\Sigma K\,da\\
&\leqslant 4\pi(g+r)+4\pi(2-2g-r)-2\int_{\partial\Sigma}k_g\,dl.
\end{align*}

Therefore
\begin{equation}\label{mainineq}
\int_{\partial\Sigma}\Pi^{\partial\Omega}(N,N)\,dl+2\int_{\partial\Sigma}k_g\,dl+\int_\Sigma H^2\,da\leqslant 4\pi(2-g).
\end{equation}



Since $\Pi^{\partial\Omega}$ satisfies the pinching condition \eqref{pinching}, the inequality \eqref{mainineq} becomes
$$
4\pi(2-g)\geqslant\int_\Sigma H^2\,da+3k\,L(\partial\Sigma),
$$
because $\Pi^{\partial\Omega}(N,N),k_g=\Pi^{\partial\Omega}(T,T)\geqslant k$, where $T$ denotes the unit vector tangent to $\partial\Sigma$. We note that the equality $k_g=\Pi^{\partial\Omega}(T,T)$ comes from the free boundary condition.

Thus we have that
\begin{equation}\label{ineq3}
4\pi(2-g)\geqslant\frac{1}{2}\int_\Sigma H^2\,da+2\left(\frac{1}{4}\int_\Sigma H^2\,da+\frac{3}{2}k\,L(\partial\Sigma)\right).
\end{equation}

Now, since $\Pi^{\partial\Omega}\leqslant (3/2)k\,h$, we have by Corollary 5.8 of \cite{Vo} that
\begin{equation}\label{volkmannineq}
\frac{1}{4}\int_\Sigma H^2\,da+\frac{3}{2}k\,L(\partial\Sigma)\geqslant 2\pi,
\end{equation}
with equality if and only if $\Sigma$ is a spherical cap or a flat unit disk in which case we are done. Thus, we may suppose strict inequality in \eqref{volkmannineq}. This together with inequality \eqref{ineq3} implies that $4\pi(2-g)>4\pi$. Therefore $g<1$, that is, $\Sigma$ has genus zero.


Finally, by using the same balancing argument as Theorem 5 of \cite{RV} together with the strict inequality in \eqref{volkmannineq} again, we conclude that $16\pi -4\pi r>4\pi$, that is, $r<3$.

Thus, we conclude that $\Sigma$ has genus zero and $\partial\Sigma$ has at most two connected components.
\end{proof}



\end{document}